\numberwithin{equation}{section} 
\theoremstyle{plain} 
\newtheorem{thm}{Theorem}[section]
\theoremstyle{definition} 
\newtheorem{defin}[thm]{Definition}
\def\vb{\protect\operatorname{vb}}
\def\proj{\protect\operatorname{proj}}
\def\cl{\protect\operatorname{cl}}
\def\cr{\protect\operatorname{cr}}
\def\b{\protect\operatorname{b}}
\def\R{{\mathbb R}}
\def\1{\hbox{\rm\rlap {1}\hskip.03in{\rom I}}} 
\def\Bbbone{{\rm1\mathchoice{\kern-0.25em}{\kern-0.25em} 
{\kern-0.2em}{\kern-0.2em}I}} 
\def\wt{\widetilde}
\begin{document}
\title[Some Corollaries of Manturov's projection Theorem] 
{Some Corollaries of Manturov's Projection Theorem} 
\author[V.~Chernov]{Vladimir V. 
Chernov}
\address{V. Chernov, Department of Mathematics, 
6188 Kemeny Hall, Dartmouth College, Hanover NH 03755, 
USA} 
\email{Vladimir.Chernov@dartmouth.edu} 

\subjclass{Primary 57M25; Secondary 57M27} 
\begin{abstract}
In our works with Stoimenow, Vdovina and with Byberi, we introduced the virtual canonical genus $g_{vc}(K)$ and the virtual bridge number $\vb(K)$ invariants of virtual knots. One can see from  the definitions that for a classical knot $K$ the values of these invariants are less or equal than the classical canonical genus $g_c(K)$ and the bridge number $b(K)$ respectively. We use Manturov's projection from the category of virtual knot diagrams to the category of classical knot diagrams, to show that for every classical knot type $K$ we have $g_{vc}(K)=g_c(K)$ and  $\vb(K)=b(K)$.
\end{abstract}

\keywords{unknotting number, bridge number, canonical genus, virtual knot, virtual string}

\maketitle

\section{Introduction}
We work in the $C^{\infty}$ category and the word ``smooth'' means $C^{\infty}.$

The Virtual Knot Theory was introduced by L.~Kauffman~\cite{Kauffman}, one of the possible uptodate references is~\cite{ManturovandIlyutko}. Let us recall some of the basic notions of virtual knot theory.

A knot is a smooth embedding $S^1\to \R^3$. A knot can be described by a knot diagram which 
is a generic immersion $S^1\to \R^2$ equipped with information
about under-passes and over-passes at double points. The sign of a double point $d$ is $+$ if the $3$-frame consisting of  the velocity vector of the over-passing branch, 
the velocity vector of the under-passing branch and the vector from the preimage of $d$ on the under-passing branch to the preimage of $d$ on the over-passing branch gives the positive orientation of $\R^3.$ Otherwise the sign of $d$ is $-.$

A knot diagram $D$ gives rise to 
a Gauss diagram $G_D$ which is a circle parameterizing the knot with each pair of preimages of double points 
of $D$ connected by an oriented signed chord. The chords are oriented from the preimage of the double point on the 
over-passing branch to the preimage of the double point on the under-passing branch.
The sign of a chord is the sign of the corresponding double point. The resulting Gauss diagram $G_D$ 
is {\em the Gauss diagram of the knot diagram $D$.\/}

Gauss diagrams that are obtainable as Gauss diagrams of some knot diagrams
are called {\em realizable.\/} 
A knot diagram corresponding to a realizable Gauss diagram can be recovered
only up to a certain ambiguity. However the isotopy type of the corresponding
knot is recoverable in a unique way, see Goussarov-Polyak-Viro~\cite{GPV}.

It is well-known that two knots described by their knot diagrams are ambient isotopic
if and only if one can change one diagram to the other by a sequence of ambient isotopies of the diagram and 
of {\em Reidemeister moves.\/}
Reidemeister moves can be easily encoded in the language of Gauss diagrams, see for example~\cite{GPV}. The equivalence classes of the, not necessarily realizable, 
Gauss diagrams modulo the resulting operations on Gauss diagrams are called {\it virtual knots.} 

We say that a virtual knot $\wt K$ is {\it realizable\/} by a classical knot $K$ if for a planar diagram  $D$
of $K$ we have that the Gauss diagram $G_D$ represents the virtual knot $\wt K.$ (So in this paper the word realizable could refer to both virtual knots and Gauss diagrams. The meaning of the word realizable should be clear from the context.)

Every Gauss diagram $G$ can be realized by a virtual knot diagram $\wt D$ which is similar to the classical knot diagram, but some of the crossings are virtual. Virtual crossings are marked by small circles and the notion of the under-passing and overpassing branch at the virtual crossing is not defined. Such virtual knot diagram $\wt D$ is not unique, but every $\wt D$ gives rise to a unique 
Gauss diagram $G_{\wt D}$ which is constructed similarly to $G_D$, but all the virtual crossings are ignored and do not give rise to any chords.  
One can define virtual Reidemeister moves for virtual knot diagrams so that the equivalence 
classes of virtual knot diagrams modulo these moves are the virtual knots,~see~\cite{Kauffman}.

The results of our work use the following Theorem of Manturov~\cite[Theorem 4]{ManturovNew} that proves the existence of a projection from the category of virtual knots to the category of 
classical knots.

\begin{thm}[Manturov]\label{Manturov}
There is a well defined map $\proj$ from the set of all Gauss diagrams to the set of realizable Gauss diagrams such that:
\begin{enumerate}
\item If two Gauss diagrams $G_1$ and $G_2$ are equivalent, then so are the realizable Gauss diagrams $\proj(G_1)$ and $\proj(G_2).$
\item The Gauss diagram $\proj (G)$ is obtained from $G$ by removing some chords.
\item If the Gauss diagram $G$ is realizable, then $\proj(G)=G.$
\end{enumerate}

\end{thm}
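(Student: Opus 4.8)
The natural route is through the \emph{Gaussian parity} of chords, the combinatorial tool underlying Manturov's construction. For a Gauss diagram $G$ and a chord $c$, call $c$ \emph{odd} if it is linked (interleaved) with an odd number of the other chords of $G$, and \emph{even} otherwise. Two facts drive the argument. First, every chord of a realizable Gauss diagram is even: a chord $c$ of a planar knot diagram cuts the underlying immersed curve into two loops $\gamma_1,\gamma_2$ based at the double point $c$, and the chords linked with $c$ are precisely the transverse intersection points of $\gamma_1$ with $\gamma_2$ lying away from $c$, while at $c$ the two loops touch tangentially rather than cross (the four local branches occur around $c$ in the cyclic order $\gamma_1,\gamma_2,\gamma_2,\gamma_1$); since the mod-$2$ intersection number of two loops in $\R^2$ is zero, the number of intersection points away from $c$ is even. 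Second, the Gaussian parity is coherent with the Reidemeister moves: an R1 move creates a chord linked with nothing, hence even, and changes no other parity; an R2 move creates two chords linked with exactly the same set of remaining chords, so they have equal parity, while each remaining chord is linked with both or with neither of them and so keeps its parity; and an R3 move changes no linking relation, so all parities are preserved.

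I would then define $\proj$ by an iterated reduction: given $G$, delete all odd chords of $G$ at once to obtain $\delta(G)$, and iterate, $G\supseteq\delta(G)\supseteq\delta^{2}(G)\supseteq\cdots$. (One must iterate, since deleting odd chords may turn a formerly even chord odd.) The number of chords strictly drops until the process stabilizes, so it terminates at a diagram $\proj(G)$ all of whose chords are even. Property (2) of the theorem is then built into the construction, and property (3) follows from the first fact above, since a realizable $G$ has no odd chords and hence is left untouched. For property (1) I would use the second fact to show that a single Reidemeister move between $G_1$ and $G_2$ passes to a Reidemeister equivalence, or an equality, between $\delta(G_1)$ and $\delta(G_2)$: the chords that the move touches carry the same odd/even status on the two sides, so the move either survives the deletion unchanged or is rendered trivial by it, and iterating this yields $\proj(G_1)\sim\proj(G_2)$.

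The main obstacle is that the converse of the first fact fails: an all-even Gauss diagram need not be realizable, so the stable diagram $\proj(G)$ produced by the reduction is not evidently classical, and proving that it is constitutes the real content of the theorem. This is where one must bring in the combinatorial realizability criterion for Gauss codes --- conditions of Lov\'{a}sz/Dehn type on the interlacement data, equivalently a genus computation for the supporting surface --- and, if the single-level reduction does not already land in a realizable diagram, iterate it through a hierarchy of successively derived parities (in the spirit of Manturov's parity theory): delete at each level the chords that are odd with respect to that level's parity, re-check coherence with the Reidemeister moves and the fixing of realizable diagrams at every level, and show that the hierarchy stabilizes at a realizable diagram after finitely many steps. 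Establishing this realizability statement, and keeping the whole tower of reductions coherent under the moves, is the crux of the proof.
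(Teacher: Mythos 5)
First, a point of comparison: the paper does not prove this statement at all --- it is imported verbatim from Manturov's preprint \cite[Theorem 4]{ManturovNew} --- so the only question is whether your sketch itself constitutes a proof. It does not, and you say so yourself: the entire content of the theorem is the claim that $\proj(G)$ lands in the \emph{realizable} Gauss diagrams, and your construction only guarantees that it lands in the all-even ones. Since (as you correctly note) evenness of every chord is necessary but not sufficient for realizability (the Lov\'asz/Dehn criterion imposes further conditions on pairs of chords), the paragraph beginning ``The main obstacle'' is not a step of a proof but a statement of the problem. Deferring it to ``a hierarchy of successively derived parities'' that one must then ``show stabilizes at a realizable diagram after finitely many steps'' leaves the crux entirely unexecuted; nothing in the proposal rules out the process stabilizing at an all-even, non-realizable diagram. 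This is a genuine gap, and it is exactly where the difficulty of Manturov's theorem lives.

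There are also two concrete errors in the part you do carry out, both in the R3 case. Your claim that ``an R3 move changes no linking relation'' is false: the triangle move reverses the order of the two chord endpoints on each of the three arcs it touches, and this \emph{toggles all three pairwise linkings} of the chords involved. (Test case: the trefoil curve with Gauss sequence $123123$, all three chords pairwise linked, becomes $211332$, all three chords pairwise unlinked.) What is true is that each of the three chords has exactly two of its linkings toggled, so its \emph{degree} changes by an even amount and the Gaussian parity survives --- but that requires the computation, not the false stronger statement. Second, your verification of property (1) for R3 silently needs the lemma that the number of odd chords among the three involved in an R3 move is $0$ or $2$ (never $1$): if exactly one were odd, deleting it would leave two surviving chords whose endpoints get transposed on a single arc, which changes their linking and is \emph{not} a Reidemeister move, so $\delta(G_1)$ and $\delta(G_2)$ would fail to be equivalent. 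This lemma is provable for Gaussian parity (every external chord is linked with an even number of the three R3 chords), but it is a necessary ingredient that your phrase ``the move either survives the deletion unchanged or is rendered trivial by it'' papers over.
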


The classical crossing number $cl(\wt K)$ of a virtual knot $\wt K$ is the minimal number of (non-virtual) crossings in a virtual diagram $\wt D$ of $\wt K.$ The crossing number $cr(K)$ of a classical knot $K$ is the minimal number of crossings in a (non-virtual) planar diagram of $K$. Clearly if a virtual knot $\wt K$ is realizable by a classical knot type $K$, then 
$\cl(\wt K)\leq \cr(K)$. Using Theorem~\ref{Manturov} Manturov~\cite[Corollary 1]{ManturovNew} proved that $\cl(\wt K)=\cr(K).$ 

We use Manturov's projection Theorem~\ref{Manturov} to prove equalities between the virtual bridge number and bridge number of a classical knot, and of the virtual canonical genus and the canonical genus of a classical knot.

\section{Equality of the virtual canonical genus and of the canonical genus invariants for classical knots}

Let $G$ be a Gauss diagram. Substitute the core circle of $G$ by an annulus whose core is the circle, and substitute the chords by non-intersecting thing strips attached to the annulus in such a way that the resulting surface with boundary is orientable. We denote the resulting {\it ribbon surface} by $F'_G.$ 
Glue the connected components of the boundary of $F'_G$ with disks to get a closed oriented surface $F_G$.  Note that this surface does not depend on the number of twists in the glued strips as soon as the result is orientable. One can show~\cite[Theorem 2.5]{StoimenowTchernovVdovina} that if $G=G_D$ is the Gauss diagram of the classical knot diagram $D$, then the genus of $F_G$ is equal to the genus of the canonical  Seifert surface obtained by applying the Seifert algorithm to $D,$ see~\cite[page 120]{Rolfsen}.

Recall that for a knot type $K$, the {\it canonical genus\/} $g_c(K)$ of $K$ is the minimum genus of the canonical Seifert surface of a knot diagram $D$ of $K$. The minimum is taken over all diagrams $D$ of $K.$ The {\it genus\/} of $K$ is the minimal genus of a Seifert surface (that does not have to come from a Seifert algorithm) for a knot realizing $K.$
Moriah~\cite{Moriah} showed that the difference between $g(K)$ and $g_{vc}(K)$ can be arbitrarily large, that is for every $n\in N$ there exists a classical knot $K$ such that $g_c(K)-g(K)>n.$
Results in the similar spirit were later obtained by Kawauchi~\cite{Kawauchi}, Stoimenow~\cite{Stroimenow}, see also Kobayashi and Kobayashi~\cite{KobayashiKobayashi}.

\begin{defin} 
Following~\cite{StoimenowTchernovVdovina} we define the canononical virtual genus $g_{vc}(\wt K)$ of the virtual knot type $\wt K$ to be the minimum of the genera of $F_G$ taken over all the Gauss diagram $G$ realizing $\widetilde K.$
\end{defin}

From the definition of $g_{vc}(\wt K)$ and~\cite[Theorem 2.5]{StoimenowTchernovVdovina} it is clear that if $\wt K$ is realizable by a classical knot $K$, then 
$g_{vc}(\wt K)\leq g_c(K),$ see~\cite[Corollary 2.8]{StoimenowTchernovVdovina}.

We prove the following Theorem.

\begin{thm}\label{canonicalgenus}
 If a virtual knot $\wt K$ is realizable by a classical knot $K$, then $g_{vc}(\wt K)=g_c(K).$
\end{thm}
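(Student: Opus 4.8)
The plan is to exploit the three properties of Manturov's projection $\proj$ together with the monotonicity of the genus $g(F_G)$ under chord removal. We already know the inequality $g_{vc}(\wt K)\leq g_c(K)$ from~\cite[Corollary 2.8]{StoimenowTchernovVdovina}, so the whole task is to prove the reverse inequality $g_{vc}(\wt K)\geq g_c(K)$. First I would pick a Gauss diagram $G$ realizing $\wt K$ that achieves the minimum, i.e. $g(F_G)=g_{vc}(\wt K)$. Applying $\proj$ yields a realizable Gauss diagram $\proj(G)$. Since $G$ is a Gauss diagram of $\wt K$ and $G$ is equivalent to itself, property~(1) alone is not quite what is needed here; rather I want to observe that $\proj(G)$ is realizable, hence it is $G_D$ for some classical knot diagram $D$, and by~\cite[Theorem 2.5]{StoimenowTchernovVdovina} the genus of $F_{\proj(G)}$ equals the canonical Seifert genus of $D$, which is at least $g_c(K)$ provided $D$ is a diagram of $K$.

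The crux is therefore to show two things: that $\proj(G)$ represents the classical knot $K$ (so that $D$ is indeed a diagram of $K$), and that $g\big(F_{\proj(G)}\big)\leq g(F_G)$. For the second point, property~(2) says $\proj(G)$ is obtained from $G$ by deleting some chords; deleting a chord from a Gauss diagram simply removes the corresponding strip from the ribbon surface $F'_G$, and capping off the boundary of a surface with fewer handles cannot increase the genus — a short Euler-characteristic bookkeeping argument shows $g(F_{\proj(G)})\leq g(F_G)$. For the first point, note that $K$, being classical, is realizable, so the Gauss diagram $G_{D_0}$ of any planar diagram $D_0$ of $K$ is realizable; since $G$ represents the virtual knot $\wt K$ which is realizable by $K$, $G$ is equivalent (as a Gauss diagram) to $G_{D_0}$. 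By property~(1), $\proj(G)$ and $\proj(G_{D_0})$ are equivalent realizable Gauss diagrams, and by property~(3) $\proj(G_{D_0})=G_{D_0}$. Hence $\proj(G)$ is a realizable Gauss diagram equivalent to $G_{D_0}$, so it is the Gauss diagram of some classical diagram $D$ of the same classical knot type $K$ (here one uses the Goussarov–Polyak–Viro fact quoted in the introduction that a realizable Gauss diagram determines the classical knot type uniquely).

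Putting these together: $g_c(K)\leq g(F_{\proj(G)})\leq g(F_G)=g_{vc}(\wt K)$, where the first inequality is the definition of $g_c$ applied to the diagram $D$ of $K$ with Gauss diagram $\proj(G)$, together with~\cite[Theorem 2.5]{StoimenowTchernovVdovina} identifying $g(F_{\proj(G)})$ with the canonical Seifert genus of $D$. Combined with $g_{vc}(\wt K)\leq g_c(K)$ this gives the equality.

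The step I expect to be the main obstacle is the careful justification that $\proj(G)$, being realizable and equivalent to $G_{D_0}$, genuinely represents the classical knot type $K$ and not merely some virtual knot that happens to be realizable. This requires invoking the uniqueness statement of Goussarov–Polyak–Viro and checking that equivalence of realizable Gauss diagrams under the Gauss-diagram Reidemeister moves corresponds to ambient isotopy of the associated classical knots — a point that is folklore but worth spelling out. The monotonicity $g(F_{\proj(G)})\leq g(F_G)$ under chord deletion is routine but should be stated explicitly as a lemma, since it is the only place the ribbon-surface construction is used quantitatively.
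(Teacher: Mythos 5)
Your proposal is correct and follows essentially the same route as the paper's own proof: minimize over Gauss diagrams, apply Manturov's projection, observe that $\proj(G)$ represents $K$ and that deleting chords cannot increase the genus of the capped-off ribbon surface, then combine with the known inequality $g_{vc}(\wt K)\leq g_c(K)$. The only difference is one of emphasis — you spell out the identification of the knot type of $\proj(G)$ in more detail, while the paper instead writes out the two-case Euler-characteristic computation (boundary components merging versus splitting) that you defer to a "routine" lemma.
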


\begin{proof} Let $\wt D$ be a virtual diagram for $\wt K$ such that the genus of $F_{G_{\wt D}}$ is equal to $g_{vc}(\wt K).$  By Theorem~\ref{Manturov} the realizable Gauss diagram $\proj(G_{\wt D})$ represents the classical knot $K$. So the genus of $F_{\proj(G_{\wt D})}$ equals to the genus of the canonical Seifert surface for the knot diagram associated to the realizable Gauss diagram  $\proj(G_{\wt D})$ of $K.$ Thus the genus of $F_{\proj(G_{\wt D})}$ is at least $g_c(K).$ Since $g_{vc}(\wt K)\leq g_c(K)$, to prove the Theorem it suffices to show that the genus of $F_{\proj(G_{\wt D})}$ is less or equal to the genus of $F_{G_{\wt D}}$.

Assume that a Gauss diagram $G_1$ is obtained from $G_2$ by deleting one chord. Let us compare the genus of $F_{G_1}$ and of $F_{G_2}.$
The ribbon surface $F'_{G_1}$ will have one less strip than $F'_{G_2}$, so $\chi(F'_{G_1})=\chi(F'_{G_2})+1$, where $\chi$ is the Euler characteristic.  

If the two edges of the strip in $F'_{G_2}$ corresponding to the deleted chord of $G_2$ belong to two components of the boundary of $F'_{G_2}$, then these two boundary components of $F'_{G_2}$ will merge  into one boundary component for $F'_{G_1}$. Thus the number of disks we have to glue to $F'_{G_1}$ to get $F_{G_1}$ is one less than the number of disks we have to glue to $F'_{G_2}$ to get $F_{G_2}.$ Thus $\chi(F_{G_1})=\chi(F_{G_2})$ and the closed oriented surfaces $F_{G_1}$ and $F_{G_2}$ have the same genus.

If the two edges of the strip in $F'_{G_2}$ corresponding to the deleted chord of $G_2$ belong to the same component of the boundary of $F'_{G_2}$, then this boundary component of $F'_{G_2}$ will disintegrate into two boundary component for $F'_{G_1}$. Thus the number of disks we have to glue to $F'_{G_1}$ to get $F_{G_1}$ is one more than the number of disks we have to glue to $F'_{G_2}$ to get $F_{G_2}.$ Thus $\chi(F_{G_1})=\chi(F_{G_2})+2$ and the closed oriented surfaces $F_{G_1}$ has genus one less than $F_{G_2}$.

In both possible cases the genus of $F_{G_1}$ is less than the genus of $F_{G_2}$. By Manturov's Theorem~\ref{Manturov} 
$\proj(G_{\wt D})$ is obtained from $G_{\wt D}$ by deleting certain chords. So the genus of $F_{\proj(G_{\wt D})}$ is less or equal to the genus of $F_{G_{\wt D}}$.
\end{proof}

\section{Equality of the virtual bridge number and of the bridge number invariants for classical knots}
Let us recall the definition of the virtual bridge number following our work with Byberi~\cite{ByberiChernov}.

A {\it bridge\/} in a classical knot diagram $D$ is an arc between two consecutive under-passes that contains nonzero 
many over-passes. The {\it bridge number\/} $\b(K)$ of a classical knot $K$, 
is the minimum number of bridges in a knot diagram $D$ realizing $K.$

Given a Gauss diagram $G_D$, the number of bridges in $D$ can be counted as the 
number of circle arcs between two consecutive  arrow heads that contain nonzero many arrow tails. Note that 
this number is well defined even for the Gauss diagram $G_{\wt D}$ of a virtual knot diagram $\wt D,$ and we call it the number of bridges in a 
Gauss diagram $G_{\wt D}$. 

\begin{defin}
The {\em virtual bridge number\/} $\vb (\wt K)$ of a virtual knot $\wt K$ is the 
minimum number of bridges in $G_{\wt D}$ over all the Gauss diagrams $\wt D$ realizing $\wt K.$ 
\end{defin}

If $\vb(\wt K)=0,$ then $\wt K$ is the unknot. There are no classical knots $K$ with $\b(K)=1$, but we showed that there are infinitely many different virtual knots $\wt K$ 
with $\vb(\wt K)=1$, see~\cite[Theorem 2.5]{ByberiChernov}.

Clearly if a virtual knot $\wt K$ is realizable by a classical knot $K$, then $\vb(\wt K)\leq \b(K),$ see~\cite[page 1]{ByberiChernov}. In~\cite{ByberiChernov} we said that it is plausible that for many virtual knots realizable by classical knots, we have $\vb(\wt K)=\b(K).$  The following theorem says that this is always true.

\begin{thm}\label{bridgenumber}
 If a virtual knot $\wt K$ is realizable by a classical knot $K$, then $\vb(\wt K)=\b(K).$
\end{thm}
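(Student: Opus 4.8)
The plan is to mimic the structure of the proof of Theorem~\ref{canonicalgenus}: start with a virtual diagram $\wt D$ realizing $\wt K$ whose Gauss diagram $G_{\wt D}$ attains the minimum number of bridges, apply Manturov's projection to obtain the realizable Gauss diagram $\proj(G_{\wt D})$, which represents the classical knot $K$, and show that the number of bridges cannot increase under $\proj$. Since $\proj(G_{\wt D})$ is a realizable Gauss diagram of $K$, the number of bridges in it is at least $\b(K)$; combined with the already-known inequality $\vb(\wt K)\le\b(K)$ and the monotonicity just claimed, this forces $\vb(\wt K)=\b(K)$.

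The crux is therefore the local claim: if $G_1$ is obtained from a Gauss diagram $G_2$ by deleting one chord, then the number of bridges of $G_1$ is less than or equal to the number of bridges of $G_2$ (then iterate, using Theorem~\ref{Manturov}(2), which says $\proj(G_{\wt D})$ is obtained by deleting chords). Recall that a bridge in a Gauss diagram is a circle arc between two consecutive arrow heads containing at least one arrow tail; equivalently, if we mark all arrow heads on the circle, the number of bridges is the number of the resulting arcs that contain at least one arrow tail. I would analyze the effect of deleting one chord $c$ according to whether the head of $c$ and the tail of $c$ are each ``isolated'' or not. Deleting $c$ removes its head-mark, merging the two arcs adjacent to that head into one arc; this merged arc contains at least one tail if and only if at least one of the two original arcs did, so the count of tail-containing arcs among those two goes from (1 or 2) down to (0 or 1), never up. Deleting $c$ also removes its tail; this can only decrease the tail-content of whichever arc contained that tail, possibly turning a tail-containing arc into a tail-free one, again never increasing the bridge count. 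One has to be slightly careful about edge cases — when $G_2$ has only one chord so $G_1$ has none (bridge number $0$, consistent), or when the head of $c$ coincides positionally with nothing else — but in every case the number of tail-containing arcs weakly decreases.

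The step I expect to be the main obstacle is making the bookkeeping in the local claim fully rigorous, in particular handling the interaction between removing the head of $c$ (which merges two arcs) and removing the tail of $c$ (which lightens one arc) simultaneously, and covering the degenerate configurations (the head of $c$ adjacent on the circle to the tail of $c$, or an arc bounded by the head of $c$ on both sides, which cannot literally happen for a single chord but its analogues must be ruled out). A clean way to avoid casework is to observe that deleting a chord is the composition of ``erase its head'' and ``erase its tail'', and to argue that each elementary erasure operation does not increase the number of tail-containing head-delimited arcs: erasing a tail obviously cannot create a tail, and erasing a head only ever unions two arcs into one. This monotonicity, applied repeatedly along the chords that $\proj$ deletes, yields that the number of bridges of $F$-data of $\proj(G_{\wt D})$ is at most that of $G_{\wt D}$, which is $\vb(\wt K)$; since it is also at least $\b(K)\ge\vb(\wt K)$, all these quantities coincide, completing the proof.
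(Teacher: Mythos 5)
Your proposal is correct and follows essentially the same route as the paper: reduce to the monotonicity of the bridge count under deletion of a single chord, then apply Theorem~\ref{Manturov} to compare $\vb(\wt K)$ with $\b(K)$ via $\proj(G_{\wt D})$. Your decomposition of the chord deletion into ``erase the head'' and ``erase the tail'' is just a slightly more systematic organization of the same case analysis the paper carries out.
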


\begin{proof}
Take a virtual knot diagram $\wt D$ realizing $\wt K$ such that the number of bridges in $\wt D$ equals to $\vb(\wt K).$ Since $\vb(\wt K)\leq \b(K)$ to prove the Theorem it suffices to show that $\b(K)\leq \vb(\wt K).$

Assume that a Gauss diagram $G_1$ is obtained from $G_2$ by deleting one chord. This chord could have participated in the count of the number of bridges in $G_2$ in one of the two possible ways. Either its arrow head was one of the bridge ends or its arrow tail was located between two consecutive arrow heads. (There is also an option that it was not influencing the number of bridges, for example if it did not intersect any other chords.) In all of these cases the number of bridges in $G_1$ is less or equal than the number of bridges in $G_2.$ 

By Manturov's Theorem the realizable Gauss diagram $\proj(G_{\wt D})$ represents the classical knot $K$ and $\proj(G_{\wt D})$ is obtained from $G_{\wt D}$ by deleting a certain number of chords. So the number of bridges of $\proj(G_{\wt D})$ is less or equal to the number of bridges of $G_{\wt D}$. It is less or equal than $\vb (\wt K)$ and greater or equal than $b(K).$ Thus $\vb(\wt K)\geq b(K)$.
\end{proof}

{\bf Acknowledgments.} 
The author is thankful to Alina Vdovina for the mathematical discussions 
during which the notion of the virtual bridge number was introduced. He is also grateful to Vassily Manturov and Denis Ilyutko for many useful discussions.

This work was partially supported by a grant from the Simons Foundation ($\#235674$ to Vladimir Chernov).

\end{document}